\begin{document}
\newcommand{\bea}{\begin{eqnarray}}
\newcommand{\eea}{\end{eqnarray}}
\newcommand{\nn}{\nonumber}
\newcommand{\bee}{\begin{eqnarray*}}
\newcommand{\eee}{\end{eqnarray*}}
\newcommand{\lb}{\label}
\newcommand{\nii}{\noindent}
\newcommand{\ii}{\indent}
\newtheorem{Identity}{Identity}[section]
\newtheorem{example}{Example}[section]
\newtheorem{counterexample}{Counterexample}[section]
\newtheorem{corollary}{Corollary}[section]
\newtheorem{definition}{Definition}[section]
\newtheorem{lemma}{Lemma}[section]
\newtheorem{remark}{Remark}[section]
\newtheorem{proposition}{Proposition}[section]
\renewcommand{\theequation}{\thesection.\arabic{equation}}
\renewcommand{\labelenumi}{(\roman{enumi})}
%\renewcommand*\thesubsection{\arabic{subsection}}
%\renewcommand{\thesection}{\arabic{section}}
%\renewcommand{\theequation}{\thesubsection.\arabic{equation}}
%\renewcommand{\theequation}{2.\arabic{equation}}
%\doublespacing
\title{\bf A probabilistic proof of the finite geometric series}
\author{Raju Dey and Suchandan Kayal\thanks {Email address (corresponding author):
                 kayals@nitrkl.ac.in,~suchandan.kayal@gmail.com}
\\{\it \small Department of Mathematics, National Institute of
Technology Rourkela, Rourkela-769008, India}}
%%\title{\bf Ordering for series and parallel systems
%%associated with heterogeneous GMW components}
%%\author{Raju Dey and Suchandan Kayal\thanks {Email address (corresponding author):
%%                    kayals@nitrkl.ac.in,~suchandan.kayal@gmail.com}
%%                    %{\it \small $^a$Dept. of Mathematics and Statistics,
%%%McMaster University, Hamilton, Ontario L8S4K1, Canada}
%%\\{\it \small Department of Mathematics, National Institute of
%%Technology Rourkela, Rourkela-769008, India}}
\date{}
\maketitle
\begin{center}
{\large \bf Abstract}
\end{center}
In this note, we present a probabilistic proof of
the well-known finite geometric series. The
proof follows by taking the moments of the sum
and the difference of two independent
exponentially distributed random variables.

\section{Introduction} There are various popular series
in the literature related to infinite series. One
of these is the geometric series. Each term of
the geometric series is a constant multiple of
the the previous term. For example, if $a$ is the
first entry of the series, then the second entry
is a constant (a real number, say $\rho\ne0$)
multiple of $a$, the third entry is (same)
constant multiple of $a\rho$, and so on. The real
number $\rho$ is known as the common ratio of the
series. The infinite geometric
series is
\begin{eqnarray}
a+a\rho+a\rho^2+a\rho^3+\cdots.
\end{eqnarray}
Without loss of generality, here, we assume that
$a=1$. Thus, the infinite geometric series
becomes
\begin{eqnarray}\label{eq1.2}
1+\rho+\rho^2+\rho^3+\cdots.
\end{eqnarray}
The partial sum of the first $n$ terms of the
infinite geometric series given by
$(\ref{eq1.2})$ is
\begin{eqnarray}\label{eq1.3}
S_{n}=\sum_{k=0}^{n-1}\rho^{k}=\frac{1-\rho^n}{1-\rho}.
\end{eqnarray}
Note that the formula given in (\ref{eq1.3}) is
also known as the geometric identity for finite
number of terms. In this letter, we establish the
identity given by (\ref{eq1.3}) from a
probabilistic point of view, which is provided in
the next section.

\section{Proof of (\ref{eq1.3})\setcounter{equation}{0}}
To establish the sum formula for the finite
geometric series, the following lemma is
useful. Let $X$ be an exponentially distributed
random variable with probability density function
\begin{eqnarray}\label{eq2.1}
f_{X}(x|\lambda) = \left\{\begin{array}{ll}
\displaystyle\lambda e^{-\lambda x},
& \textrm{if $x>0,$}\\
0,& \textrm{otherwise,}
\end{array} \right.
\end{eqnarray}
where $\lambda>0$. If $X$ has the density
function given by (\ref{eq2.1}), then for
convenience, we denote $X\sim Exp(\lambda)$.
\begin{lemma}
Let $X$ and $Y$ be two independent random
variables such that $X\sim Exp(\lambda)$ and
$Y\sim Exp(\mu)$, where $\lambda\ne\mu$. Further,
let $U=X+Y$ and $V=X-Y$. Then,
\begin{eqnarray}\label{eq2.2}
f_{U}(u|\lambda,\mu) = \left\{\begin{array}{ll}
\displaystyle\frac{\lambda\mu}{\lambda-\mu}~\left(e^{-\mu
u}- e^{-\lambda u}\right),
& \textrm{if $u>0,$}\\
0,& \textrm{otherwise}
\end{array} \right.
\end{eqnarray}
and
\begin{eqnarray}\label{eq2.3}
f_{V}(v|\lambda,\mu) = \left\{\begin{array}{ll}
\displaystyle\frac{\lambda\mu}{\lambda+\mu}~e^{-\lambda
v},
& \textrm{if $v>0,$}\\
\displaystyle\frac{\lambda\mu}{\lambda+\mu}~e^{\mu
v},& \textrm{otherwise.}
\end{array} \right.
\end{eqnarray}
\end{lemma}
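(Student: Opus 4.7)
The plan is to obtain each density by direct convolution, splitting into cases based on the sign of the argument.

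For $U = X + Y$, I would invoke the standard convolution formula
\begin{equation*}
f_U(u) = \int_{-\infty}^{\infty} f_X(x)\, f_Y(u-x)\,dx.
\end{equation*}
Since both $f_X$ and $f_Y$ are supported on $(0,\infty)$, the integrand is nonzero only when $0 < x < u$, which forces $u > 0$. Substituting the exponential densities yields
\begin{equation*}
f_U(u) = \lambda\mu\, e^{-\mu u}\int_0^u e^{-(\lambda-\mu)x}\,dx,
\end{equation*}
which is an elementary integral. The hypothesis $\lambda \neq \mu$ ensures that $\lambda-\mu$ is nonzero, so dividing through gives the claimed form $\frac{\lambda\mu}{\lambda-\mu}(e^{-\mu u}-e^{-\lambda u})$.

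For $V = X - Y$, I would write $V$ as the sum of $X$ and $-Y$ (or equivalently use $f_V(v) = \int f_X(x)\,f_Y(x-v)\,dx$). The key point is that the support constraints $x > 0$ and $x-v > 0$ interact differently depending on the sign of $v$: if $v > 0$ the effective lower limit is $x = v$, while if $v \leq 0$ it is $x = 0$. So I would split into two cases and in each case compute an integral of the form $\int_a^{\infty} e^{-(\lambda+\mu)x}\,dx$, pulling the $v$-dependent exponential outside. The two cases collapse neatly into the piecewise formula \eqref{eq2.3}, and note that here the denominator $\lambda+\mu$ is automatically positive, so no non-degeneracy assumption beyond $\lambda,\mu>0$ is needed for this density.

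The steps are largely routine; the only mild obstacle is being careful with the support bookkeeping in the $V$ computation, namely correctly identifying that the lower limit of the $x$-integral is $\max(0,v)$ and that this case-split produces the two branches of the piecewise definition. A sanity check at the end is that both densities integrate to one and reduce to the correct limiting form as $\mu \to \lambda$ (for $U$) or as one of the parameters is fixed (for $V$), which would confirm the algebraic manipulations.
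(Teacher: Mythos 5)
Your proposal is correct and is essentially the same argument as the paper's: the paper obtains $f_U$ by the bivariate transformation $(U,U_1)=(X+Y,X)$ and then integrates out $u_1$ over $0<u_1<u$, which is exactly your convolution integral $\lambda\mu\,e^{-\mu u}\int_0^u e^{-(\lambda-\mu)x}\,dx$, and its treatment of $V=X-Y$ (via $V_1=X$) amounts to the same $\max(0,v)$ support bookkeeping you describe. Your write-up just phrases the marginalization directly as a convolution and spells out the case split for $V$ that the paper leaves as ``similarly.''
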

\begin{proof}
Consider the transformations $U=X+Y$ and
$U_{1}=X$. Then, the joint probability density
function of $U$ and $U_{1}$ is obtained as
\begin{eqnarray}\label{eq2.4}
f_{U,U_{1}}(u,u_{1})=\lambda \mu
e^{-(\lambda-\mu)u_{1}}e^{-\mu
u},~~0<u<\infty,~0<u_{1}<u.
\end{eqnarray}
Thus, the probability density function of $U$
given by $(\ref{eq2.2})$ can be obtained after
integrating (\ref{eq2.4}) with respect to
$u_{1}$. Similarly, the density function in
(\ref{eq2.3}) can be derived if we take the
transformations $V=X-Y$ and $V_{1}=X$.
\end{proof}

Now, we present the proof of (\ref{eq1.3}). Note
that the $k$th order moments of $X$ and $Y$ about
the origin are respectively given by
$E(X^k)=k!/\lambda^{k}$ and
$E(Y^{k})=k!/\mu^{k}$, where $k=0,1,2,\cdots$ (see Rohatgi and Saleh, 2015).
Further, using (\ref{eq2.2}), it can be shown
that
\begin{eqnarray}\label{eq2.5}
E(X+Y)^{n-1}&=&\int_{0}^{\infty}u^{n-1}\left(\frac{\lambda\mu}{\lambda-\mu}\right)\left(e^{-\mu
u}- e^{-\lambda u}\right)du\nonumber\\
&=&\frac{(n-1)!(\lambda\mu)
}{\lambda-\mu}\left(\frac{1}{\mu^{n}}-\frac{1}{\lambda^{n}}\right).
\end{eqnarray}
The
binomial theorem in (\ref{eq2.5}) yields
\begin{eqnarray}\label{eq2.6}
E\left(\sum_{k=0}^{n-1}{n-1\choose k}X^{k}Y^{n-1-k}\right)&=&\frac{(n-1)!(\lambda\mu)
}{\lambda-\mu}\left(\frac{1}{\mu^{n}}-\frac{1}{\lambda^{n}}\right)\nonumber\\
\Rightarrow \sum_{k=0}^{n-1}{n-1\choose k}E(X^{k})E(Y^{n-1-k})&=&\frac{(n-1)!(\lambda\mu)
}{\lambda-\mu}\left(\frac{1}{\mu^{n}}-\frac{1}{\lambda^{n}}\right)\nonumber\\
\Rightarrow \sum_{k=0}^{n-1}{n-1\choose k} \frac{k!}{\lambda^{k}}\frac{(n-1-k)!}{\mu^{n-1-k}}&=&\frac{(n-1)!(\lambda\mu)
}{\lambda-\mu}\left(\frac{1}{\mu^{n}}-\frac{1}{\lambda^{n}}\right)\nonumber\\
\Rightarrow \sum_{k=0}^{n-1} \left(\frac{\mu}{\lambda}\right)^{k}&=&\frac{\lambda \mu^{n}}{\lambda-\mu}\left(
\frac{1}{\mu^{n}}-\frac{1}{\lambda^{n}}\right)\nonumber\\
&=& \frac{1}{1-\frac{\mu}{\lambda}}\left(1-\left(\frac{\mu}{\lambda}\right)^{n}\right)
\end{eqnarray}
Thus, the identity given by (\ref{eq1.3}) is
established when $\rho=\mu/\lambda$ is a strictly
positive real number. To prove the identity when
$\rho=\mu/\lambda<0$, we consider the expectation
of $(X-Y)^{n-1}$, where the probability density
function of $V=X-Y$ is given by (\ref{eq2.3}).
Now,
\begin{eqnarray}
E(X-Y)^{n-1}&=&\int_{0}^{\infty}u^{n-1}\frac{\lambda \mu}{\lambda+\mu}e^{-\lambda u}du+
\int_{-\infty}^{0}u^{n-1}\frac{\lambda \mu}{\lambda+\mu}e^{\mu u}du\\
&=&\frac{(n-1)!\lambda
\mu}{\lambda+\mu}
\left(\frac{1}{\lambda^{n}}+(-1)^{n-1}\frac{1}{\mu^{n}}\right).
\end{eqnarray}
Using similar arguments to (\ref{eq2.6}), we obtain
\begin{eqnarray}\label{eq2.8}
\sum_{k=0}^{n-1}\left(-\frac{\mu}{\lambda}\right)^{k}=
\frac{1-\left(-\frac{\mu}{\lambda}\right)^{n}}
{1-\left(-\frac{\mu}{\lambda}\right)}.
\end{eqnarray}
Thus, the desired identity is proved when
$\rho=-\mu/\lambda$ is strictly negative.
Combining (\ref{eq2.6}) and (\ref{eq2.8}), the
identity given by (\ref{eq1.3}) is established
for any nonzero real number $\rho$.

\begin{remark}
Taking limit $n\rightarrow \infty$ in
(\ref{eq1.3}), one can prove that
$\sum_{k=0}^{\infty}\rho^{k}=\frac{1}{1-\rho}$
for $|\rho|<1$.
\end{remark}
\noindent
\\
{\bf \large References}\\
Rohatgi, V. K. and Saleh, A. M. E. (2015). An introduction to probability and statistics,
John Wiley \& Sons.
\end{document}